\newcommand{\N}{\mathbb{N}}
\newcommand{\R}{\mathbb{R}}
\newcommand{\Q}{\mathbb{Q}}
\newcommand{\cC}{\mathcal{C}}
\newcommand{\floor}[1]{\left\lfloor #1 \right\rfloor}
\newcommand{\abs}[1]{\left| #1 \right|}
\newcommand{\rb}[1]{\left( #1 \right)}
\newtheorem{theorem}{Theorem}[section]
\newtheorem*{theorem*}{Theorem}
\newtheorem{lemma}[theorem]{Lemma}
\newtheorem{proposition}[theorem]{Proposition}
\theoremstyle{definition}
\newtheorem{definition}[theorem]{Definition}
\theoremstyle{remark}
\newtheorem*{remark}{Remark}
\newtheorem*{example}{Example}
\newcommand{\set}[1]{\{#1\}}
\newcommand{\setm}[2]{\set{#1\mid#2}}
\newcommand{\VC}[1]{\mathrm{VC}\left( #1 \right)}
\begin{document}
 \selectlanguage{english}
 \title{The VC-Dimension of Axis-Parallel Boxes %and Cubes
  on the Torus}
 \author{P. Gillibert}
 \email{gillibert.pierre@tuwien.ac.at}
\address{Institut f\"ur Diskrete Mathematik und Geometrie
TU Wien\\
Wiedner Hauptstr. 8--10\\
1040 Wien, Austria}
 
 \author{T. Lachmann}
 \email{thomas.lachmann@jku.at}
 \address{Institut f\"ur Finanzmathematik und Angewandte Zahlentheorie JKU Linz, Altenberger Straße 69
4040 Linz, Austria}
 
 \author{C. M\"ullner}
 \email{clemens.muellner@tuwien.ac.at}
\address{Institut f\"ur Diskrete Mathematik und Geometrie
TU Wien\\
Wiedner Hauptstr. 8--10\\
1040 Wien, Austria}
 
 \thanks{The first author is supported by by the Austrian Science Foundation FWF, project P32337.
 The second author is supported by the Austrian Science Foundation FWF, projects F5505-N26 and Y-901. The third author is supported by the Austrian Science Foundation FWF, project F5502-N26. The projects F5502-N26 and F5505-N26 are part of the Special Research Program ``Quasi Monte Carlo Methods: Theory and Applications''.}
 \maketitle
  \begin{abstract}
	%The VC-dimension of axis-parallel boxes in the $d$-dimensional cube is a well known example and usually the first to see after introducing the notion of VC-dimension. The VC-dimension of the axis-parallel boxes on a torus has not been studied yet. We are going to show that the exact order of this hypothesis set is $\VC{B^d_\text{per}}=d\log_2\left(d\right)+O\left(\log\left(\log\left(d\right)\right)\right)$.
	%The Vapnik-Chervonenkis (VC) dimension of a collection of subsets of a set is an important combinatorial concept in settings such as 			discrete geometry, discrepancy theory and machine learning. 
	We show in this paper that the VC-dimension of the family of $d$-dimensional axis-parallel boxes and cubes on the $d$-dimensional torus are both asymptotically $d \log_2(d)$.
	This is especially surprising as the VC-dimension usually grows linearly with $d$ in similar settings.
  \end{abstract}

	\section{Introduction}
The Vapnik-Chervonenkis-dimension (in short VC-dimension) is an important combinatorial concept and has interesting applications in different fields, such as discrepancy theory (see for example \cite{Matousek1999}, \cite{Matousek1993}, and \cite{Hinrichs2004}), connections to dispersion (see \cite{Rudolf2018}), measure theory (for literature see \cite{Dudley1984}, \cite{Dudley2014}, or \cite{vdVaart1996}), and machine learning (for literature see \cite{Mohri2018} or \cite{Shalev2014}). The last became more and more popular over the last years and is one of the foundation stone of artificial intelligence. 

The VC-dimension was introduced in~\cite{Vapnik1971} and can be used to measure the ability of a model to classify datasets \footnote{We also mention here the related problem of sample compression which is treated for example in \cite{Devroye1996}, \cite{Doliwa2014}, \cite{Moran2017}, \cite{Moran2016}, and \cite{Vapnik1998}.}. 
Given a dataset $C$ of $n$ points which can be labeled in $2^n$ different ways with $2$ labels (usually named \emph{positive} and \emph{negative}) and a \emph{hypothesis set} $H$. It is said that the hypothesis set $H$ shatters the point set $C$ if for any labeling of the $n$ points in $C$ there is a hypothesis $h\in H$ that separates the negative points from the positive points. The maximal number of points that can be shattered by a given hypothesis set $H$ is called the Vapnik-Chervonenkis-dimension $\VC{H}$ of $H$ (a more formal definition can be found in Section~\ref{sec_2}).
	
	Many classical examples of exactly computed VC-dimensions on the space $[0,1]^d$ are known whereas for these examples it is equivalent to consider $\R^d$.
	Already in~\cite{Vapnik1971} was established that the VC-dimension of half-spaces in dimension $d$ is $d+1$.
	As a direct consequence one finds that the VC-dimension of spheres in dimension $d$ is also $d+1$. %\CM{Who proved that first?}
	Another example are axis-parallel boxes in $[0,1]^d$ either anchored at the origin or not with corresponding VC-dimensions $d$ and $2d$ respectively.
	Furthermore, there is a new result by Despres~\cite{Despres} showing that the VC-dimension of cubes in $[0,1]^d$ is $\floor{\frac{3d+1}{2}}$.
	
	We denote by $\mathcal{B}^d_{\text{per}}$ (resp. $\mathcal{C}^d_{\text{per}})$ the set of $d$-dimensional axis-parallel boxes (resp. cubes) within the $d$-dimensional torus, see Section~\ref{sec_2} for precise definitions.
	With this we can state our main result. We are mainly interested in the case of axis-parallel boxes, but since the exact same method also works for cubes, we treat them as well.
	
	\begin{theorem}\label{th_main}
For $d$ sufficiently large we have
%	\begin{align*}
%		d(\log_2(d) - 4 \log_2(\log_2(d))) \leq \VC{\mathcal{C}^d_{\text{per}}}\leq \VC{\mathcal{B}^d_{\text{per}}} \leq d(\log_2(d) + 3 \log_2(\log_2(d))).
%	\end{align*}
	
	\begin{align*}
		d(\log_2(d) - 4 \log_2(\log_2(d))) & \leq \VC{\mathcal{C}^d_{\text{per}}}  \\
		& \leq \VC{\mathcal{B}^d_{\text{per}}} \leq d(\log_2(d) + 3 \log_2(\log_2(d))).
	\end{align*}
	
	This shows in particular that $\VC{\mathcal{B}^d_{\text{per}}} \sim \VC{\mathcal{C}^d_{\text{per}}} \sim d \log_2(d)$.
\end{theorem}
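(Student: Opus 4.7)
The middle inequality is immediate from $\mathcal{C}^d_{\text{per}} \subseteq \mathcal{B}^d_{\text{per}}$, leaving two substantive tasks: an upper bound on $\VC{\mathcal{B}^d_{\text{per}}}$ and a lower bound on $\VC{\mathcal{C}^d_{\text{per}}}$.

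For the upper bound, the plan is a counting argument. Fix a shattered set $X \subseteq \mathbb{T}^d$ with $|X| = N$ and, for each $S \subseteq X$, choose a witness box $B_S = \prod_i A_i^S$. Only the trace $A_i^S \cap \pi_i(X)$ matters, and on a cyclic sequence of $k \leq N$ points there are only $k(k-1) + 2 \leq N^2$ cyclic intervals. The injection $S \mapsto (A_1^S, \ldots, A_d^S)$ then gives $2^N \leq N^{2d}$, which iterates to the crude estimate $N \lesssim 2 d \log_2 d$. To sharpen the leading constant from $2$ to $1$, I would pass to a \emph{canonical} choice — say, take $A_i^S$ to be the minimal arc containing $\pi_i(S)$, so its endpoints automatically lie in $\pi_i(S) \subseteq \pi_i(X)$ — and then, instead of the blunt product bound, apply Sauer--Shelah per coordinate while separating coordinates in which the arc is ``short'' (essentially containing $\pi_i(S)$) from those in which it is ``long'' (the near-complement of a single exclusion interval). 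A careful entropy-style accounting of these two regimes should gain a factor of roughly $2$ in the logarithm and deliver $N \leq d \log_2 N + O(d \log_2 \log_2 N)$, which iterates to the claimed bound $d(\log_2 d + 3 \log_2 \log_2 d)$.

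For the lower bound, the plan is to construct a shattered set of $N \approx d \log_2 d$ points, using cubes. Placing $x_1, \ldots, x_N \in \mathbb{T}^d$, each coordinate projection induces a cyclic ordering $\sigma_i$ of $[N]$. A cube carves out $S$ iff one can pick cyclic intervals $T_1, \ldots, T_d$ (of a common length, for cubes) with $S \subseteq T_i$ and $\bigcap_i T_i = S$; equivalently, the exclusion sets $E_i = [N] \setminus T_i$, each a sub-interval of a single gap of $S$ in $\sigma_i$, must jointly cover $[N] \setminus S$. My strategy is a probabilistic construction: sample the $N$ points uniformly in $\mathbb{T}^d$ (so that the $\sigma_i$ are independent uniform random cyclic orderings) and bound, for each $S$, the probability that no joint covering exists. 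The key combinatorial input is that the gaps of a fixed $S$ in a random ordering are spread out enough — essentially a coupon-collector-type analysis — that across $d$ independent orderings a covering of $[N] \setminus S$ exists except with probability $e^{-\Omega(d)}$, so a union bound over the $2^N$ subsets closes as soon as $N \leq d \log_2 d - O(d \log_2 \log_2 d)$.

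The hard part will be the lower bound. One must control the $2^N$ union-bound terms \emph{uniformly} across subset sizes $|S| = 0, \ldots, N$, and for each fixed $S$ analyze the probability that the gaps of $S$ in $d$ random cyclic orderings jointly fail to cover $[N] \setminus S$ by sub-interval choices. The delicate $\log_2 \log_2 d$ correction emerges from balancing the exponential decay in $d$ against the $2^N$ subsets and the $N - |S|$ points that must be covered in each event. Conceptually, the reason the answer exceeds the classical $\Theta(d)$ bound available for boxes in $\mathbb{R}^d$ is exactly the toroidal wrap-around: each arc has both a ``short'' and a ``long'' realization, giving each coordinate substantially more than a constant amount of information about $S$, and it is the product of these cyclic freedoms over the $d$ coordinates that produces the extra $\log_2 d$ factor.
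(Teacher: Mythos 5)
The middle inequality is trivial, and your broad strategy---count for the upper bound, construct for the lower bound---matches the paper's. But both halves of your sketch stop short precisely where the work lies, and the lower-bound half is numerically inconsistent.

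\emph{Upper bound.} Your naive count $2^n \leq (n+1)^{2d}$ does give $n \lesssim 2d\log_2 d$, and passing to a canonical minimal box is a sensible normalization (by minimality it is contained in any witness for $S$, hence still carves out exactly $S$). But the assertion that a ``short''/``long'' dichotomy and an ``entropy-style accounting'' will halve the leading constant is a hope, not an argument: you give no mechanism. The paper's saving comes from a different idea. To each box $h$ it attaches the vector $(k_1,\dots,k_d)$ where $k_i = |C\cap h_{i-1}|-|C\cap h_i|$ is the number of points removed by the $i$-th coordinate restriction, and it observes that once $k_1+\cdots+k_i$ points are gone, the arc in coordinate $i+1$ only needs to be an arc in the cyclic order of the remaining $n-k_1-\cdots-k_i$ points. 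Summing the telescoping product $n(n-k_1)\cdots(n-k_1-\cdots-k_{d-1})$ over admissible $(k_1,\dots,k_d)$ and bounding by nested integrals gives roughly $n^{2d}/(2d-1)!!$; since $(2d-1)!!\approx(2d/e)^d$, this subtracts $d\log_2 d + O(d)$ from the exponent, which is exactly what turns $2d\log_2 d$ into $d\log_2 d$. Nothing in your sketch reproduces that extra factorial.

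\emph{Lower bound.} Your plan does not close as stated. You posit a per-subset failure probability $e^{-\Omega(d)}$ for a uniformly random placement and then union-bound over $2^N$ subsets; with $N\approx d\log_2 d$ one has $2^N=e^{\Theta(d\log d)}$, so you would need failure probability $e^{-\Omega(d\log d)}$, not $e^{-\Omega(d)}$, and your coupon-collector heuristic does not promise that. You also have no mechanism to enforce the common edge length that cubes require in a random placement, beyond noting it parenthetically. The paper avoids all of this by a structured, essentially deterministic construction. It first shows $\VC{\mathcal{S}^k_l}\geq\log_2 k$ for stripes via an explicit $(\log_2 k)$-point configuration, then introduces the \emph{$k$-extraction property} of a $c\times d$ matrix $M$ (a Hall-type matching condition) and proves that any such $M$ lifts stripe-shattering to $\VC{\mathcal{C}^d_{\text{per}}}\geq c\cdot\VC{\mathcal{S}^k_l}$: the shattered set consists of $c$ copies of the stripe configuration, each confined to its own sub-cube $\left(\frac{i-1}{c+1},\frac{i}{c+1}\right)^d$ of $\mathbb{T}^d$, with coordinates distributed according to $M$, so every stripe used has the same length $l/(c+1)$ and the cube constraint is automatic. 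Randomness enters only in a first-moment count over ``balanced'' matrices to show that a good $M$ exists. Choosing $k\approx d/\log_2^2 d$ and $c\approx d/k$ then gives $c\log_2 k = d\log_2 d - O(d\log_2\log_2 d)$. If you want to make a uniformly random point cloud work, you would need concentration far sharper than $e^{-\Omega(d)}$ uniformly over all $S$ (the hard cases being $|S|$ of intermediate size), and I would expect you to end up having to impose something like this layered structure anyway.
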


\begin{remark}
For small dimensions, we know the exact values $\VC{\mathcal{B}^1_{\text{per}}}=3$, $\VC{\mathcal{B}^2_{\text{per}}}=6$, $\VC{\mathcal{B}^3_{\text{per}}}=11$, and the lower bound $\VC{\mathcal{B}^4_{\text{per}}}\geq 15$ which were determined using computer assistance (personal communication with Manfred Scheucher, TU Berlin).
%\CM{Is there anything known for cubes?}
\end{remark}

	This result is interesting for a few reasons. 
	First of all, the VC-dimension does not grow linearly in contrast to the (very similar looking) examples stated above.
	Secondly, there is an essential difference for the VC-dimension of boxes and cubes in $\R^d$ which seems to vanish when considering the $d$-dimensional Torus.
	Lastly, the example of boxes in $[0,1]^d$ can also be considered as the direct product of $d$ times the interval $[0,1]$. For such products there have been a few general lower bounds and some upper bounds. For example \cite{Dudley2014}, p. 192--200 contains results which implies the VC-dimension for boxes. Not a lot of general lower bounds are known for different hypothesis sets and usually there are some strong assumptions connected to the sets. Some interesting upper bounds are known, besides the ones in aforementioned sources, such as one given by van der Vaart and Wellner in \cite{vdVaart2009} including the so-called \emph{entropy} of the involved sets.
The shortage of such general bounds makes our result even more interesting since axis-parallel boxes on the torus do not fulfill any of the assumption usually used to calculate the VC-dimension of such hypothesis sets.
	
	The paper is structured as follows. In Section~\ref{sec_2} we give some basic definitions and discuss the VC-dimension of a very simple subclass of $\mathcal{B}^d_{\text{per}}$ called stripes.
	We give an upper bounds for $\VC{\mathcal{B}^d_{\text{per}}}$ in Section~\ref{sec_3} which solely relies on quite precise counting.
	The heart of the paper is Section~\ref{sec_4} where we establish the lower bound for $\VC{\mathcal{C}^d_{\text{per}}}$. This relies on a sophisticated scheme which allows to build configurations of points (based on the result for stripes) that can be shattered by $\mathcal{C}^d_{\text{per}}$.

\section{Basic definitions and stripes on the torus}\label{sec_2}
\subsection{Definitions}\label{subsec_def}
First we give some basic definitions that are needed throughout the paper.
For two functions, $f$ and $g$, where $g$ only takes strictly positive real values such that $\abs{f}/g$ is bounded, we write $f = O(g)$ or
$f\ll g$. If $\lim_{x\to \infty} \abs{f(x)}/g(x) = 0$ we write $f = o(g)$ and if $\lim_{x\to \infty} f(x)/g(x) = 1$ we write $f \sim g$.

Furthermore, we give a formal definition of the VC-dimension.
\begin{definition}
  Let $H$ be a set family (a set of sets) and $C$ a set. Their intersection is defined as the following set-family:
\begin{align*}
    H\cap C \coloneqq \setm{h\cap C}{h\in H}.
\end{align*}
We say that a set $C$ is \emph{shattered} by $H$ if $H\cap C$ contains all the subsets of $C$, i.e.:
\begin{align*}
   |H\cap C|=2^{|C|}.
\end{align*}
The \emph{VC-dimension of $H$}, which we denote by $\VC{H}$, is the largest integer $D$ such that there exists a set $C$ with cardinality $D$ that is shattered by $H$.
	\end{definition}

	We are working on the torus $\mathbb{T}$, i.e. the interval $[0,1]$ where we identify $0$ with $1$.
	Thus, an interval takes either the form $[a,b]$ for $a < b$ or $[a,b] := [0,b] \cup [a,1]$ for $a > b$.
	We define open intervals $(a,b)$ analogously.
	We can also assign each interval a \emph{length} which is given by $b-a$ and $1-a+b$ corresponding to the cases described above.
	
	This allows us to define the set $\mathcal{B}^d_{\text{per}}$ of \emph{$d$-dimensional axis-parallel boxes}, within $\mathbb{T}^{d}$ the $d$-dimensional torus, as the product of $d$ intervals on the torus i.e. $\mathcal{B}^d_{\text{per}} := \setm{[a_1, b_1] \times \ldots \times [a_d, b_d]}{0 \leq a_i \neq b_i \leq 1$ for all $i = 1,\ldots d}$.
	Furthermore, we define the subset of \emph{$d$-dimensional axis-parallel cubes} $\mathcal{C}^d_{\text{per}}$, as axis-parallel boxes where the intervals have the same length.

	It is clear that $H$ shattering a set $C$ is equivalent to $H^{c}$ shattering the same set $C$, where $H^{c}$ denotes the element-wise complement of $H$.
	
	Thus, we can also work with the element-wise complement of $\mathcal{B}^d_{\text{per}}$, in which element are of the form
	\begin{align*}
		([a_1, b_1] \times \ldots \times [a_d, b_d])^{\mathrm{c}} = \bigcup_{1\leq i \leq d} \mathbb{T}^{i-1} \times (b_i, a_i) \times \mathbb{T}^{d-i}.
	\end{align*}
	We are especially interested in the case where $b_i < a_i$ for all $i = 1,\ldots, d$.
	
	\begin{definition}
		We call $\mathbb{T}^{i-1} \times (b_i, a_i) \times \mathbb{T}^{d-i}$ a \emph{$d$-dimensional stripe anchored in dimension $i$} if $b_i < a_i$.
		Furthermore, we denote by $\mathcal{S}^d$ the set of all $d$-dimensional stripes (anchored in some dimension $i$).
		Moreover, we define its \emph{length} to be $a_i-b_i$ and denote by $\mathcal{S}^d_{l}$ the set of all $d$-dimensional stripes of length $l$.
	\end{definition}
	
	\begin{remark}
		%It is not a big restriction to only consider the case $b_i < a_i$ for all $i = 1,\ldots, d$.
		%The problem is clearly rotation invariant in each dimension which allows to always have a point at $0^d$. 
		%For any $h \in \machtal{C}^{d}_{\text{per}}$ that contains $0^d$ we need to have $b_i < a_i$.
		%Thus, we 

		We note that it is equivalent for stripes to be defined on $\mathbb{T}^d$ or $[0,1]^d$ as $b_i < a_i$.
	\end{remark}
	
	%\CM{As soon as we start talking about stripes, we are not really working on the torus anymore.}
	%\TL{Maybe put here the assumption somewhere that we could put a point into the origin?}
	
	Note that in particular the complement of $\mathcal{B}^d_{\text{per}}$ contains the union of $d$-dimensional stripes anchored in different dimensions. Moreover, the complement of $\mathcal{C}^{d}_{\text{per}}$ contains the union of $d$ stripes anchored in different dimensions with equal length.

%	\CM{This does not seem like a very natural place for this estimate...}	
%	Since we are using it quite often we recall a basic estimate for binomial coefficients
%\[	
%	\binom{n}{k}\leq\frac{n^{k}}{k!}\leq n^{k}.
%\]	
\subsection{The special subclass $\mathcal{S}^d_l$}
	This part is devoted to estimating $\VC{\mathcal{S}^d_l}$.
	\begin{proposition}\label{subclass S^d}
		We have for any $0<l<1$ and sufficiently large $d$, 
		\begin{align*}
			\log_2(d) \leq \VC{\mathcal{S}^d_l} \leq \log_2(d) + 2 \log_2(\log_2(d)) + 1.
		\end{align*}
		This shows in particular that $\VC{\mathcal{S}^d_l } \sim \log_2(d)$.
	\end{proposition}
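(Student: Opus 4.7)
My plan is to prove the two bounds separately: the lower bound by an explicit construction, and the upper bound by a straightforward counting argument. For the lower bound, I would fix $n := \lfloor \log_2 d \rfloor$, so that $2^n \leq d$, and assign to each subset $S \subseteq \{1, \dots, n\}$ a distinct coordinate direction $i_S \in \{1, \dots, d\}$. I would then define points $p_1, \dots, p_n \in \mathbb{T}^d$ by setting, for every $S$, the $i_S$-th coordinate of $p_k$ to be $l/2$ if $k \in S$ and $(1+l)/2$ otherwise (any remaining coordinates are set to $0$, say). Since $l/2 \in (0,l)$ and $(1+l)/2 \notin (0,l)$, the stripe $\mathbb{T}^{i_S-1} \times (0,l) \times \mathbb{T}^{d-i_S}$ lies in $\mathcal{S}^d_l$ and selects exactly $\{p_k : k \in S\}$; the singleton choices $S = \{k\}$ already force the points to be pairwise distinct, so $\{p_1, \dots, p_n\}$ is a shattered set.

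For the upper bound, suppose that $N$ points $p_1, \dots, p_N$ are shattered by $\mathcal{S}^d_l$. For each fixed dimension $i$ I would count how many subsets are realizable by a length-$l$ stripe anchored in dimension $i$: as the left endpoint $b$ of the open interval $(b, b+l)$ sweeps across $[0, 1-l]$, the realized subset changes only when $b$ crosses some $p_k^{(i)}$ or some $p_k^{(i)} - l$, giving at most $2N$ events and hence at most $2N+1$ distinct subsets. Summing over all $d$ dimensions yields
\begin{equation*}
2^N \leq d(2N+1).
\end{equation*}
A first crude estimate on this inequality gives $N \leq 3 \log_2 d$ for $d$ large (otherwise $2^N$ already dwarfs the right-hand side), and substituting this back into $N \leq \log_2 d + \log_2(2N+1)$ yields $N \leq \log_2 d + \log_2 \log_2 d + O(1)$, which is at most $\log_2 d + 2 \log_2 \log_2 d + 1$ for all sufficiently large $d$.

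The proof contains no deep obstacle; the only step deserving real care is the per-dimension counting argument. Recognising that a sliding interval of \emph{fixed} length $l$ realises only $O(N)$ subsets of an $N$-point set — rather than exponentially many — is precisely the mechanism that prevents $\VC{\mathcal{S}^d_l}$ from growing linearly in $d$ and forces the much slower logarithmic rate observed in the statement.
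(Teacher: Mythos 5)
Your upper bound is correct and in fact a touch sharper than the paper's: the paper counts at most $(N+1)^2$ realizable traces per dimension (allowing intervals of arbitrary length), whereas you exploit the fixed length $l$ to get at most $2N+1$. Either count, fed into $2^N \le d\cdot(\text{polynomial in }N)$, forces $N \le \log_2 d + O(\log_2\log_2 d)$, and your tighter count would actually reduce the coefficient of $\log_2\log_2 d$ from $2$ to $1$.

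The lower bound, however, falls short by one point, which is a genuine gap relative to the stated inequality. Your construction shatters $n = \lfloor \log_2 d\rfloor$ points, spending one coordinate direction per subset of $\{1,\dots,n\}$ (so $2^n\le d$ dimensions), and therefore proves only $\VC{\mathcal{S}^d_l}\ge\lfloor\log_2 d\rfloor$. That is strictly weaker than the claimed $\VC{\mathcal{S}^d_l}\ge\log_2 d$ whenever $d$ is not a power of two. The paper squeezes out one extra point via a complementation trick: take a set $C$ of $n+1$ points, group the $2^{n+1}$ subsets into $2^n$ complementary pairs $\{C_i, C\setminus C_i\}$, and devote a single dimension $i$ to each pair by placing the two blocks at distance greater than $l$ in that coordinate (e.g.\ at $\tfrac{1-l}{3}$ and $\tfrac{2+l}{3}$). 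A length-$l$ stripe anchored in dimension $i$ can then be slid to capture $C_i$ or $C\setminus C_i$ but never both, so each dimension realizes \emph{two} subsets. This shatters $n+1$ points with only $2^n$ dimensions, hence $\VC{\mathcal{S}^d_l}\ge \lfloor\log_2 d\rfloor + 1\ge\log_2 d$ for all $d$. The idea you are missing is that one stripe direction can serve both a subset and its complement, halving the number of directions you need per point.
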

	\begin{proof}
		To show the first inequality, it is sufficient to construct a set $C$ of $n+1$ points in dimension $2^n$ that are shattered by $S_{2^n}$.
		Therefore, we pair the subsets of $C$ as $\set{C_i, C\setminus C_i}$. Obviously, there are $2^n$ such pairs.
		Now we choose the coordinates in dimension $i$ such that the points belonging to $C_i$ have coordinates equal to $\frac{1-l}{3}$ and the points belonging to $C\setminus C_i$ have coordinates equal to $\frac{2+l}{3}$. These coordinates have distance $\frac{2l + 1}{3} > l$, so that any interval of length $l$ can contain at most one of them.
		
		Thus, we can construct for any $C' \subset C$ some $h \in S^{2^n}_l$ such that $C \cap h = C'$.
		Indeed write $C' = C_i$ or $C' = C \setminus C_i$. In any case we can choose a stripe anchored in dimension $i$ with length $l$ that contains either $\frac{1-l}{3}$ or $\frac{2l+1}{3}$ and avoids the other.
		
		It remains to show the upper bound.
		Therefore, we count the number of ways in which points can be separated with $d$-dimensional stripes.
		Let us take now $n$ points in $\mathbb{T}^d$.
		The coordinates of these points in dimension $i$ are cyclically ordered, so we can only take intervals in this order. Therefore, there are at most $(n+1)^2$ ways that a stripe anchored in dimension $i$ can separate points.
		Thus, we have in total at most $d \cdot (n+1)^2$ ways to separate points.
		Assuming that $\mathcal{S}^d_l$ shatters $n$ points in dimension $d$ gives, therefore, 
		\begin{align*}
			2^{n} \leq d \cdot (n+1)^2.
		\end{align*}
		This gives in particular for $n = \log_2(d) + 2 \log_2(\log_2(d)) + 1$,
		\begin{align*}
			d \cdot \log_2(d)^2 \cdot 2 \leq d \cdot (\log_2(d) + 2 \log_2(\log_2(d))+2)^2,
		\end{align*}
		which obviously does not hold for large enough $d$.
		This shows that for large enough $d$ we cannot shatter $\log_2(d) + 2 \log_2(\log_2(d)) + 1$ points.
	\end{proof}

\section{Upper bounds for $\VC{\mathcal{B}^d_{\mathrm{per}}}$}\label{sec_3}
Similar computations also give surprisingly accurate upper bounds for $\VC{\mathcal{B}^d_{\text{per}}}$.
We first give an easy version which is computed almost identically to the upper bound for $\VC{\mathcal{S}^d}$.
Basically the same upper bound was also given by van der Vaart and Wellner in \cite{vdVaart2009}.

	\begin{lemma}
		We have for sufficiently large $d$,
		\begin{align*}
			\VC{\mathcal{B}^d_{\text{per}}} \leq 3 d \log_2(d).
		\end{align*}
	\end{lemma}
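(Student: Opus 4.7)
The plan is to mimic the counting argument used for $\mathcal{S}^d_l$ in Proposition~\ref{subclass S^d}, dimension by dimension, and then to show that the resulting bound $2^n \leq (n+1)^{2d}$ forces $n < 3d\log_2(d)$ for $d$ large.

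First I would fix $n$ arbitrary points $C \subset \mathbb{T}^d$. In each coordinate direction $i \in \{1,\ldots,d\}$, the projections of the points of $C$ sit in the cyclic order of $\mathbb{T}$. A one-dimensional interval $[a_i,b_i]$ (possibly wrapping around $0$) picks out from the projected point set a cyclically contiguous arc; such an arc is determined by the choice of its two endpoints among the $n$ projected points, plus boundary cases (empty, or all of $C$). A generous count gives at most $(n+1)^2$ distinct subsets of $C$ realisable by varying the single interval in dimension $i$.

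Since a box $B = [a_1,b_1]\times\cdots\times[a_d,b_d]$ is a product of intervals, the set $B \cap C$ is completely determined by the $d$ subsets of $C$ cut out by the individual intervals in each dimension (a point of $C$ lies in $B$ iff it lies in all $d$ coordinate-wise intervals). Therefore
\begin{align*}
  \abs{\mathcal{B}^d_{\text{per}} \cap C} \leq \rb{(n+1)^2}^d = (n+1)^{2d}.
\end{align*}
If $C$ is shattered, then $\abs{\mathcal{B}^d_{\text{per}} \cap C} = 2^n$, which yields
\begin{align*}
  2^n \leq (n+1)^{2d}, \quad \text{equivalently} \quad n \leq 2d \log_2(n+1).
\end{align*}

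The last step is the only thing to verify: I plug in $n = \lceil 3d\log_2(d)\rceil$ and observe that the right-hand side is $2d\log_2\rb{3d\log_2(d)+1} = 2d\log_2(d) + O(d \log_2\log_2(d))$, while the left-hand side is $3d\log_2(d)$. For $d$ sufficiently large, the gap $d\log_2(d)$ overwhelms the $O(d\log_2\log_2(d))$ correction, so the inequality fails, giving the desired contradiction. I expect no real obstacle here; the whole argument is essentially a one-line volume bound followed by a routine asymptotic check, which matches the author's remark that this lemma is proved by the same counting used for stripes.
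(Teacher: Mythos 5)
Your proposal is correct and follows the same route as the paper: bound the number of realisable subsets per coordinate by $(n+1)^2$, multiply across $d$ coordinates to get $(n+1)^{2d}$, and then plug in $n = 3d\log_2(d)$ to derive a contradiction for $d$ large. The only cosmetic difference is that the paper verifies the final inequality $d^{3d} \leq (3d\log_2(d)+1)^{2d}$ directly rather than via the asymptotic comparison you sketch; both are routine.
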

	\begin{proof}
		We note that, similarly to the upper bound for $\VC{\mathcal{S}^d_l}$, there are at most $(n+1)^2$ different ways to intersect $n$ points when one only looks at one dimension. Therefore, there are in total at most $(n+1)^{2d}$ different ways to intersect $n$ points in $d$ dimensions, i.e.
		\begin{align}\label{eq_trivial_upper_bound}
			\abs{\setm{C \cap h}{h \in \mathcal{B}^d_{\text{per}}}} \leq (n+1)^{2d}.
		\end{align}
		Thus, assuming that $\mathcal{B}^d_{\text{per}}$ shatters $n$ points, we have
		\begin{align*}
			2^{n} \leq (n +1)^{2d}.
		\end{align*} 
		Now if one puts $n = 3 d \log_2(d)$ this gives
		\begin{align*}
			d^{3d} \leq (3 d \log_2(d) +1)^{2d},
		\end{align*}
		which does not hold for large enough $d$.
		This shows that for large enough $d$ we cannot shatter $3 d \log_2(d)$ points.
	\end{proof}

	The main over-simplification in the proof above origins from considering all the coordinates independently.
	Now we give a more involved, but also more accurate estimate.

	\begin{theorem}\label{th_VC_S}
		For sufficiently large $d$ we have 
		\begin{align*}
			\VC{\mathcal{B}^d_{\text{per}}} \leq d (\log_2(d) + 3 \log_2(\log_2(d))).
		\end{align*}
	\end{theorem}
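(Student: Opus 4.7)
The plan is to refine the counting of the previous lemma. That proof bounded the number of distinct traces on a candidate set $C$ of $n$ points by $(n+1)^{2d}$, treating the $d$ coordinate slabs as independent; this is wasteful because many tuples of slab-traces collapse to the same box-trace.

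Fix a set $C \subseteq \mathbb{T}^d$ with $|C|=n$, and for each dimension $i$ let $\pi_i$ denote the cyclic order of $C$ induced by the $i$-th coordinate. For a box $B=[a_1,b_1]\times\cdots\times[a_d,b_d]$, the trace decomposes as $B\cap C = \bigcap_{i=1}^d T_i(B)$, where $T_i(B)$ is one of the at most $n(n-1)+2$ cyclic arcs of $(C,\pi_i)$ cut out by the $i$-th slab. Among all boxes that realize a given trace $S = B\cap C$, pick the canonical representative by setting $T_i(B)$ equal to the minimal cyclic arc $M_i(S)$ of $(C,\pi_i)$ containing the $i$-projection of $S$. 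Because $S$ is the trace of some box, one verifies that $S=\bigcap_i M_i(S)$, and both endpoints of $M_i(S)$ are forced to lie in $S$ itself (they are the leftmost and rightmost $S$-points in $\pi_i$). In particular the complementary arcs $M_i(S)^{\mathrm{c}} \subseteq C \setminus S$ form a covering of $C \setminus S$.

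The refined counting proceeds as follows: each canonical tuple $(M_1(S),\ldots,M_d(S))$ is determined by $2d$ endpoints, all forced to lie in $S$, together with a gap-length profile $(|M_i(S)^{\mathrm{c}}|)_i$ constrained by the covering condition $\bigcup_i M_i(S)^{\mathrm{c}} = C \setminus S$. I would group the $d$ dimensions dyadically by the size of $|M_i(S)^{\mathrm{c}}|$, bound within each group the number of cyclic arcs of that size using a standard binomial estimate such as $\binom{n}{k} \leq (en/k)^k$, and then combine the contributions across groups. The effect is to replace the per-dimension factor $n^2$ from the trivial bound by a smaller factor of order $n \cdot (\log_2 n)^2$, giving a total estimate of the form $\#\{\text{traces on }C\} \leq \bigl(n(\log_2 n)^2\bigr)^d$ up to lower-order factors.

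Substituting $n = d(\log_2 d + 3\log_2\log_2 d)$ into the resulting inequality $2^n \leq \bigl(n(\log_2 n)^2\bigr)^d$ and checking that it fails for all sufficiently large $d$ yields the desired upper bound on $\VC{\mathcal{B}^d_{\mathrm{per}}}$. The main obstacle lies in the dyadic bookkeeping: converting the single joint constraint $\bigcup_i M_i(S)^{\mathrm{c}} = C \setminus S$ into a tight per-dimension estimate with precisely the right polylogarithmic exponent requires careful balancing of dimensions with large gaps (few arcs each, but collectively constrained) against dimensions with tiny or empty gaps (many arcs but essentially free). I expect this balancing to be where most of the technical work lives and where the constant $3$ in front of $d\log_2\log_2 d$ emerges.
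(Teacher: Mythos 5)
Your starting observation is the right one: the trivial bound $(n+1)^{2d}$ is wasteful because it treats the $d$ coordinate slabs as independent. And the canonical-representative idea (choose the minimal arcs $M_i(S)$, whose endpoints lie in $S$ and whose complements cover $C\setminus S$) is a sensible way to impose a joint constraint on the $d$ slab choices. But from there the argument has a real gap: the claimed bound $\#\{\text{traces}\}\lesssim\bigl(n(\log_2 n)^2\bigr)^d$ is asserted, not derived. You describe a dyadic grouping by gap size and a $\binom{n}{k}\le(en/k)^k$ estimate, but you do not carry out the balancing, and you yourself flag this as ``where most of the technical work lives.'' There is no visible route from the covering condition to that specific polylogarithmic exponent.

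More seriously, even if that bound could be established, it is too weak to prove the theorem as stated. Plugging $n=d(\log_2 d+3\log_2\log_2 d)$ in, one finds $2^{n/d}=d(\log_2 d)^3$ while $n(\log_2 n)^2\approx d(\log_2 d)^3$ as well, so the two sides match to leading order and the contradiction $2^n>\#\{\text{traces}\}$ does not clearly materialize; whether it does hinges entirely on the unspecified ``lower-order factors.'' The paper's bound is genuinely sharper: it is $\frac{2^d n^{2d}}{(2d-1)!!}\approx\bigl(\frac{n^2 e}{d}\bigr)^d$, whose per-dimension factor for this choice of $n$ is $\approx e\,d(\log_2 d)^2$, smaller than yours by a factor $\log_2 d/e$, and this gap is exactly what makes the contradiction go through.

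The paper's route is also structurally different. Rather than choosing a canonical box and grouping dimensions dyadically by gap size, it classifies boxes by the vector $k(h)=(k_1,\dots,k_d)$ recording how many points each successive slab removes, bounds the number of traces with a fixed $k$-vector by the telescoping product $n(n-k_1)\cdots(n-k_1-\cdots-k_{d-1})$ (at each step the surviving points are cyclically ordered, so a slab dropping $k_i$ of them can do so in at most $n-k_1-\cdots-k_{i-1}$ ways), and then sums over $k$-vectors using iterated comparison with $\int_0^x t^m\,dt$. This produces the double-factorial denominator $(2d-1)!!$, and Stirling converts that into the needed $\bigl(\frac{n^2 e}{d}\bigr)^d$. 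If you want to salvage your approach, you would need to sharpen the per-dimension factor from $n(\log_2 n)^2$ down to something like $n\log_2 n$, which your covering/dyadic sketch does not obviously deliver.
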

	\begin{remark}
		The factor $3$ is not optimized and could for example be replaced by $2 + \varepsilon$ for any $\varepsilon > 0$ or equivalently by $2+ o(1)$, i.e. letting $\varepsilon$ depend on $d$.
	\end{remark}
	
	\begin{proof}
		We fix a set $C$ of $n$ points in $\mathbb{T}^d$. Given $h \in \mathcal{B}^d_{\text{per}}$, we can write $h = [a_1, b_1] \times \ldots \times [a_d, b_d]$. We define restricted versions of $h$ via
		\begin{align*}
			h_0 &:= \mathbb{T}^d\\
			h_1 &:= [a_1, b_1] \times \mathbb{T}^{d-1}\\
				&\vdots\\
			h_i &=  [a_1, b_1] \times \ldots \times [a_i, b_i] \times \mathbb{T}^{d-i}\\
				&\vdots\\
			h_d &= [a_1, b_1] \times \ldots \times [a_d, b_d].
		\end{align*}
		We have in particular $\mathbb{T}^d = h_0 \supset h_1 \supset \ldots \supset h_d = h$.
		
		This allows us to define $k(h) := (k(h, 0), k(h, 1), \ldots , k(h, d))$ via
		\begin{align*}
			k(h,0) &:= \abs{C \cap h_0} = n.\\
			k(h, i) &:= \abs{C \cap h_{i-1}} - \abs{C \cap h_i} \, \forall \, i = 1,\ldots,d.
		\end{align*}
		A simple computation gives $\abs{C \cap h} = n - k(h,1) - \ldots - k(h, d)$.
		
		Now we want to count how many different ways some $h\in \mathcal{B}^d_{\text{per}}$ can separate points of $C$ for some given $k(h)$.
		We start by fixing some $(k_1, \ldots, k_d)$ where each $k_i > 0$ and $k_1 + \ldots + k_d \leq n$ (the case where some $k_i = 0$ will be dealt with separately).
		
		We start by proving
		\begin{align*}
			\abs{\setm{C \cap h_1}{h \in \mathcal{B}^d_{\text{per}}, k(h) = (n, k_1, \ldots,k_d)}} \leq n.
		\end{align*}
		To see this inequality we look again at the coordinates of the points in $C$ in dimension $1$. 
		These are again cyclically ordered, so that there are at most $n$ different ways of separating $n - k_1$ consecutive points.
		%is is due to the fact that we can only choose the ``starting point'' of the $k_1$ elements that are excluded by $h_1$ for $k_1 < n$ and for $k_1 = n$ there is only one possibility. 
		Furthermore, we denote by 
		\begin{align*}
			\cC_1 := \setm{C \cap h_1}{h \in \mathcal{B}^d_{\text{per}}, k(h) = (n, k_1, \ldots,k_d)}.
		\end{align*}
		We find by the discussion above that $\abs{\cC_1} \leq n$ and for any $C' \in \cC_1$ we have $\abs{C'} = n-k_1$.
		
		Next we find as $h_2 \subseteq h_1$,
		\begin{align*}
			&\setm{C \cap h_2}{h \in \mathcal{B}^d_{\text{per}}, k(h) = (n, k_1, \ldots, k_d)}\\
			&\qquad =\setm{(C \cap h_1) \cap h_2}{h \in \mathcal{B}^d_{\text{per}}, k(h) = (n, k_1, \ldots, k_d)}\\
			&\qquad \subseteq \bigcup_{C' \in \cC_1} \setm{C' \cap h_2}{h \in \mathcal{B}^d_{\text{per}}, k(h) = (n, k_1, \ldots, k_d)}
		\end{align*}
		We find by the same reasoning as above that for any $C' \in \cC_1$ we have
		\begin{align*}
			\abs{ \setm{C' \cap h_2}{h \in \mathcal{B}^d_{\text{per}}, k(h) = (n, k_1, \ldots, k_d)}} \leq n-k_1.
		\end{align*}
		This shows in total that 
		\begin{align*}
			\abs{\setm{C \cap h_2}{h \in \mathcal{B}^d_{\text{per}}, k(h) = (n, k_1, \ldots, k_d)}} \leq n \cdot (n-k_1).
		\end{align*}
		
		An inductive argument proves (since $h_d = h$),
		\begin{align*}
			&\abs{\setm{C \cap h}{h \in \mathcal{B}^d_{\text{per}}, k(h) = (n, k_1, \ldots, k_d)}}\\
			&\qquad \qquad  \leq n \cdot (n-k_1) \cdots (n-k_1-\ldots-k_{d-1}).
		\end{align*}
		
		We find in total
%		\begin{align*}
%			\abs{\setm{C \cap h}{h \in \mathcal{B}^d_{\text{per}}}} \leq \sum_{\substack{0 \leq k_1, \ldots, k_d\\ k_1 + \ldots + k_d \leq n}} \abs{\setm{C \cap h}{h \in \mathcal{B}^d_{\text{per}}, k(h) = (n, k_1, \ldots, k_d)}}.
%		\end{align*}
		
		\begin{align*}
			&\abs{\setm{C \cap h}{h \in \mathcal{B}^d_{\text{per}}}}  \\ 
			& \leq \sum_{\substack{0 \leq k_1, \ldots, k_d\\ k_1 + \ldots + k_d \leq n}} \abs{\setm{C \cap h}{h \in \mathcal{B}^d_{\text{per}}, k(h) = (n, k_1, \ldots, k_d)}}.
		\end{align*}

		We already have a good upper bound for the case where $k_i > 0$ for all $i = 1,\ldots,d$ and we compute this one explicitly,
		\begin{align*}
		\sum_{\substack{0<k_{1},\ldots,k_{d}\\ k_{1}+\ldots+k_{d}\leq n}} & \abs{\setm{C \cap h}{h \in \mathcal{B}^d_{\text{per}}, k(h) = (n, k_1, \ldots, k_d)}}\\
		&\leq \sum_{\substack{0<k_{1},\ldots,k_{d}\\
k_{1}+\ldots+k_{d}\leq n
}
} n\cdot(n-k_{1})\cdots(n-k_{1}-\ldots-k_{d-1})\\
 & =\sum_{k_{1} = 1}^{n}\sum_{k_{2} = 1}^{n-k_{1}}\ldots\sum_{k_{d} = 1}^{n-k_{1}-\ldots-k_{d-1}}n\cdot(n-k_{1})\cdots(n-k_{1}-\ldots-k_{d-1})\\
 & =n\sum_{k_{1} = 1}^{n} (n-k_{1})
 \ldots\sum_{k_{d-1} = 1}^{n-k_{1}-\ldots-k_{d-2}}(n-k_{1}-\ldots-k_{d-1})\sum_{k_{d} = 1}^{n-k_{1}-\ldots-k_{d-1}}1\\
 & = n\sum_{k_{1} = 1}^{n} (n-k_{1})\ldots\sum_{k_{d-1} = 1}^{n-k_{1}-\ldots-k_{d-2}}((n-k_{1}-\ldots-k_{d-2})-k_{d-1})^{2}.
		\end{align*}

		We see that the inner most sum is actually
		\begin{align*}
			\sum_{i = 0}^{n-k_1 - \ldots - k_{d-2} - 1} i^2
		\end{align*}
		in reverse order. One can bound this sum from above by an integral
		\begin{align}\label{eq_integral}
			\sum_{i = 0}^{x-1} i^m \leq \int_0^{x} t^m dt = \frac{x^{m+1}}{m+1}.
		\end{align}
		
		Inserting \eqref{eq_integral} gives the upper bound
		\begin{align*}
			\sum_{\substack{0<k_{1},\ldots,k_{d}\\ k_{1}+\ldots+k_{d}\leq n}} & \abs{\setm{C \cap h}{h \in \mathcal{B}^d_{\text{per}}, k(h) = (n, k_1, \ldots, k_d)}}\\
				&\leq n \sum_{k_1 = 1}^{n} (n-k_{1}) \ldots \sum_{k_{d-2} = 1}^{n-k_1 - \ldots - k_{d-3}} \frac{(n-k_1-\ldots-k_{d-2})^4}{3}.
		\end{align*}
		Using the same trick as above yields
		\begin{align*}
			n &\sum_{0 < k_1 \leq n} (n-k_{1}) \ldots \sum_{0 < k_{d-2} \leq n-k_1 - \ldots - k_{d-3}} \frac{(n-k_1-\ldots-k_{d-2})^4}{3} \\
				&\leq n \sum_{k_1 =1}^{n} (n-k_{1}) \ldots \sum_{k_{d-3} = 1}^{n-k_1 - \ldots - k_{d-4}} \frac{(n-k_1-\ldots-k_{d-3})^6}{3\cdot 5}.
		\end{align*}
		
		Iterating this procedure gives in total
		\begin{align*}
			\sum_{\substack{0<k_{1},\ldots,k_{d}\\ k_{1}+\ldots+k_{d}\leq n}} & \abs{\setm{C \cap h}{h \in \mathcal{B}^d_{\text{per}}, k(h) = (n, k_1, \ldots, k_d)}}\\
				&\leq \frac{n^{2d}}{1 \cdot 3 \cdot 5 \cdots (2d-1)}\\
				&= \frac{n^{2d}}{(2d-1)!!},
		\end{align*}
		where $n!! \coloneqq n(n-2)(n-4)\cdot\ldots \cdot 1$ denotes the double factorial.
		
		Now we discuss the case where at least one of the $k_i = 0$.
		Suppose that there are $j$ indices $i_1, \ldots, i_j$ such that $k_{i_1} = \ldots = k_{i_j} = 0$.
		One sees that this is equivalent to changing $d \mapsto d-j$ and ignoring the coordinates $i_1, \ldots, i_j$, which forces the remaining $k_i$ to fulfill $k_i > 0$.
		This gives now the total estimate
		
		\begin{align*}
			\abs{\setm{C \cap h}{h \in \mathcal{B}^d_{\text{per}}}} \leq \sum_{0 \leq j \leq d} \binom{d}{j} \frac{n^{2(d-j)}}{(2(d-j)-1)!!}.
		\end{align*}
		Assuming $n \geq d$, we find
		\begin{align}
			\abs{\setm{C \cap h}{h \in \mathcal{B}^d_{\text{per}}}} \leq \sum_{0 \leq j \leq d} \binom{d}{j} \frac{n^{2d}}{(2d-1)!!} = \frac{2^d \cdot n^{2d}}{(2d-1)!!}.
		\end{align}
		Thus, we have improved on~\eqref{eq_trivial_upper_bound} by more than a factor $\frac{2^d}{(2d-1)!!}$.
		We rewrite this expression and use Stirling's formula to find
		\begin{align*}
			\frac{2^d \cdot n^{2d}}{(2d-1)!!} &= 2^d \cdot n^{2d} \cdot \frac{2^d \cdot d!}{(2d)!} \leq (4 n^2)^{d} \frac{\sqrt{2 \pi d} \rb{\frac{d}{e}}^{d} e^{1/12 d}}{\sqrt{2 \pi 2d}\rb{\frac{2d}{e}}^{2d} e^{1/(24d +1)}}\\
				&= \rb{\frac{n^2 e}{d}}^{d} \frac{e^{1/12d - 1/(24d +1)}}{\sqrt{2}} \leq \rb{\frac{n^2 e}{d}}^{d} \frac{e}{\sqrt{2}}.
		\end{align*}
		
		Thus, assuming that $\mathcal{B}^d_{\text{per}}$ shatters $n$ points, we have
		\begin{align*}
			2^{n} \leq \rb{\frac{n^2 e}{d}}^{d} \frac{e}{\sqrt{2}}.
		\end{align*} 
		Now if one puts $n = d (\log_2(d) + 3 \log_2(\log_2(d)))$ this gives
		\begin{align*}
			d^{d} \cdot \log_2(d)^{3d} \leq \rb{d (\log_2(d) + 3 \log_2(\log_2(d)))^2 e}^{d} \frac{e}{\sqrt{2}} 
		\end{align*}
		or equivalently
		\begin{align*}
			\log_2(d)^3 \leq (\log_2(d) + 3 \log_2(\log_2(d)))^2 \cdot e \cdot \rb{\frac{e}{\sqrt{2}}}^{1/d},
		\end{align*}
		which obviously does not hold for large enough $d$.
		This shows that for large enough $d$ we cannot shatter $d (\log_2(d) + 3 \log_2(\log_2(d)))$ points.
	\end{proof}
	
\section{ A lower bound for  $\VC{\mathcal{C}^d_{\mathrm{per}}}$}\label{sec_4}
To find a lower bound one naturally aims to construct some configuration of points that can be shattered by $\mathcal{C}^d_{\text{per}}$.
The main idea is to start with a set of points that can be shattered by $\mathcal{S}^k_l$ for some $k\leq d$ and map them multiple times into $\mathbb{T}^d$ such that they can be shattered by $\mathcal{C}^d_{\text{per}}$.
Therefore, we will use the following concept.

\subsection{Extraction Property}
	\begin{definition}
		We say a $c \times d$ matrix $M$ with entries in $\mathcal{A}_k = \set{a_1, \ldots, a_k}$ has the \emph{$k$-extraction property} if for any $b_1, \ldots, b_c \in \mathcal{A}_k$ there exist $j_1, \ldots, j_c \in \set{1,\ldots, d}$ which are pairwise different, such that $M_{i, j_i} = b_i$ for all $i$.
	\end{definition}
	
	\begin{example}
		We show that the following $c \times (c+1)$ matrix $M$ has the $2$-extraction property.
		$M$ is given as follows,
		\begin{align*}
			M := \begin{pmatrix}
				a_1 & a_2 & a_1 & a_1 & \ldots & a_1\\
				a_1 & a_1 & a_2 & a_1 & \ldots & a_1\\
				a_1 & a_1 & a_1 & a_2 & \ldots & a_1\\
				&\vdots&&& \ddots&\vdots\\
				a_1 & a_1 & a_1 & a_1 & \ldots & a_2
			\end{pmatrix}.
		\end{align*}
		We see that $a_2$ appears in every row exactly once and always in different columns. 
		%In particular there is one column where it does not appear - the first one.
		Let us take now any word $b_1, \ldots, b_c \in \set{a_1, a_2}$.
		We can choose $j_i = i+1$ for each $i$ with $b_i = a_2$. The remaining $j_i$ can be simply chosen by picking the first column, that has not yet been used, for each row that contains $a_1$.
	\end{example}
	
	Now we show how the $k$-extraction property can be used to find lower bounds for $\VC{\mathcal{C}^d_{\text{per}}}$.
	In particular, it allows to transfer lower bounds of $\VC{\mathcal{S}^k_l}$ to lower bounds for $\VC{\mathcal{C}^d_{\text{per}}}$.
	This will be the main tool for the proof of our lower bounds of $\VC{\mathcal{C}^d_{\text{per}}}$.
	\begin{proposition}\label{pr_lifting_VC}
		Suppose there exists a $c \times d$ matrix with the $k$-extraction property.
		Then for any $0 < l < 1$,
		\begin{align}
			\VC{\mathcal{C}^d_{\text{per}}} \geq c \cdot \VC{\mathcal{S}^k_l} \geq c \cdot \log_2(k).
		\end{align}
	\end{proposition}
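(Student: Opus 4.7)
\noindent\textit{Proof plan.} The second inequality $\VC{\mathcal{S}^k_l} \geq \log_2(k)$ is exactly the lower bound from Proposition~\ref{subclass S^d}, so I only need to prove $\VC{\mathcal{C}^d_{\text{per}}} \geq c \cdot \VC{\mathcal{S}^k_l}$. Set $n := \VC{\mathcal{S}^k_l}$ and fix a set $C_0 = \set{x_1, \ldots, x_n} \subseteq \mathbb{T}^k$ shattered by $\mathcal{S}^k_l$. The strategy is to construct a set $C \subseteq \mathbb{T}^d$ of cardinality $c \cdot n$ by assigning, for each row $i$ of $M$, an individual ``copy'' of $C_0$ realized through the entries of that row, and then to verify that $C$ is shattered by $\mathcal{C}^d_{\text{per}}$. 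It is convenient to work on the complementary side: the complement of a cube of edge length $1-l$ is precisely a union of $d$ stripes of common length $l$, one anchored in each dimension.

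Concretely, I would set
\[
(P_{i,j})_m := (x_j)_{s(i,m)} + \rho_i \pmod{1}, \qquad i \in \set{1, \ldots, c},\, j \in \set{1, \ldots, n},\, m \in \set{1, \ldots, d},
\]
where $s(i,m) \in \set{1,\ldots,k}$ is the unique index with $M_{i,m} = a_{s(i,m)}$ and $\rho_1, \ldots, \rho_c \in \mathbb{T}$ are row-dependent shifts. Beforehand I would translate and rescale $C_0$ so that every coordinate of every $x_j$ lies in a short sub-interval $[0, L] \subseteq \mathbb{T}$, with $L$ small enough that the translates $[\rho_i, \rho_i + L]$ are pairwise disjoint on $\mathbb{T}$ and every gap between consecutive ones exceeds $l$. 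A small generic perturbation of $C_0$, which does not disturb shattering by $\mathcal{S}^k_l$, then ensures that the $c \cdot n$ points $P_{i,j}$ are pairwise distinct.

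To shatter, fix any target $C' \subseteq C$ and let $\tilde S_i := \set{j : P_{i,j} \notin C'}$. The shattering of $C_0$ delivers stripes $T_i \in \mathcal{S}^k_l$, anchored in dimensions $s_i \in \set{1, \ldots, k}$ with intervals $I_i$ of length $l$, satisfying $T_i \cap C_0 = \set{x_j : j \in \tilde S_i}$. Feeding the word $(a_{s_1}, \ldots, a_{s_c}) \in \mathcal{A}_k^c$ into the $k$-extraction property of $M$ yields pairwise distinct columns $m_1, \ldots, m_c \in \set{1, \ldots, d}$ with $M_{i, m_i} = a_{s_i}$. I then assemble $U := \bigcup_{m=1}^{d} U_m$, where $U_{m_i}$ is the stripe in dimension $m_i$ with interval $\rho_i + I_i$, and for $m \notin \set{m_1, \ldots, m_c}$, $U_m$ is a length-$l$ stripe placed in $\mathbb{T} \setminus \bigcup_{i'} [\rho_{i'}, \rho_{i'} + L]$, so that it meets no point of $C$ along dimension $m$. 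The complement $U^c$ is then a cube in $\mathcal{C}^d_{\text{per}}$, and the claim is $U^c \cap C = C'$.

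The main obstacle is the cross-row verification: when $P_{i,j}$ is meant to lie outside $U$, a stripe $U_{m_{i'}}$ with $i' \neq i$ could in principle absorb it, since its $m_{i'}$-th coordinate equals $(x_j)_{s(i, m_{i'})} + \rho_i$, on which the $k$-extraction property gives no direct control. The row shifts are engineered precisely to rule this out: $(P_{i,j})_{m_{i'}}$ is forced to lie in $[\rho_i, \rho_i + L]$, while the interval $\rho_{i'} + I_{i'}$ sits within a stripe-width of $\rho_{i'}$, and the two become disjoint as soon as every gap between the $\rho_i$'s exceeds $L + l$. This compatibility constraint forces $l$ to be small compared to $1/c$, but Proposition~\ref{subclass S^d} guarantees $\VC{\mathcal{S}^k_l} \geq \log_2(k)$ for every admissible $l$, so we are free to shrink $l$ without weakening the bound. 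Once the cross-interference is eliminated, the on-diagonal identity $P_{i,j} \in U_{m_i}$ iff $j \in \tilde S_i$ reduces directly to the defining property of $T_i$ together with the shift cancellation in the coordinate formula.
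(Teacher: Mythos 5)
Your proposal is correct and follows essentially the same construction as the paper: shattered points from $\mathbb{T}^k$ are lifted into $\mathbb{T}^d$ row by row via the extraction matrix, the rows are separated by shifts so that each row's points and its corresponding stripe live in a disjoint window, and the target subset is realized as the complement of a union of $d$ equal-length stripes. The paper simply takes $\rho_i = (i-1)/(c+1)$ with a uniform scaling by $1/(c+1)$ rather than your abstract $\rho_i$ and $L$, and one could remark that (since every symbol of $\mathcal{A}_k$ must occur in every row of an extraction matrix) the constructed points are automatically distinct, so the generic perturbation you invoke is not needed.
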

	\begin{remark}
		The construction ensures that we only need cubes with length $1-\frac{l}{c+1}$ or equivalently stripes with length $\frac{l}{c+1}$.
	\end{remark}
	\begin{proof}
		Let us denote $u = \VC{\mathcal{S}^k_l}$ and pick a set of $u$ points $X:=\set{x(1), \ldots, x(u)} \subseteq\mathbb{T}^{k}$ that is shattered by $\mathcal{S}^k_l$.
		We write $x(p) = (x(p)_1, \ldots, x(p)_k)$.
		Furthermore, we have a $c \times d$ matrix $M$ with entries in $\mathcal{A}_k = \set{a_1, \ldots, a_k}$ fulfilling the $k$-extraction property.
		This allows us to define new points $y(i;j) \in \mathbb{T}^{d}$, where $1 \leq i \leq c, 1 \leq j \leq u$.
		We write them as $y(i;j) = (y(i;j)_1, \ldots, y(i;j)_{d})$.
		We set for $1\leq n \leq d$,
		\begin{align}
			y(i;j)_n := \frac{i-1}{c+1} + \frac{x(j)_{\ell}}{c+1},
		\end{align}
		where $\ell$ is the unique integer such that $M_{i, n} = a_{\ell}$.
		
		We see in particular that $y(i; j) \in (\frac{i-1}{c+1}, \frac{i}{c+1})^d$ for all $i,j$.
		Thus, the points are grouped into $r$ sets of size $u$, all of which are contained in $(0,\frac{r}{c+1})^d$.
		
		It remains to show that we can shatter these points by $\mathcal{C}^d_{\text{per}}$.
		As already mentioned earlier, we work with the complement of $\mathcal{C}^d_{\text{per}}$ instead, which contains the set of unions of $d$ stripes of equal length.
		%\CM{This is of course not really necessary and we might consider working with $\mathcal{B}^s_{\text{per}}$ directly.}
		
		We take now an arbitrary subset $C'$ of $C := \setm{y(i;j)}{1\leq i \leq c, 1 \leq j \leq u}$.
		Our goal is to find a union of $d$ stripes of length $\frac{l}{c+1}$ such that its intersection with $C$ is $C'$.
		
		We define for each $1\leq i \leq c$ a set $C_i := \setm{y(i;j)}{1\le j\le u, y(i;j) \in C'}$ and $X_i := \setm{x(j)}{1\le j\le u, y(i,j)\in C'}$.
		As we assumed that we can shatter $X$ by $\mathcal{S}^k_l$, there exists a $k$-dimensional stripe of length $l$ anchored in some dimension $m_i$ such that intersection with $X$ is $X_i$. We denote it by $\mathbb{T}^{m_i-1} \times (\alpha_i, \beta_i) \times \mathbb{T}^{k-m_i}$.
		Finally we define $b_i := a_{m_i}$ for $1\leq i \leq c$.
		
		As $M$ has the $k$-extraction property we find some pairwise different $n_1, \ldots, n_c \in \set{1, \ldots, d}$ such that $M_{i, n_i} = b_i$ for all $i$.
		Now for a given $1 \leq i \leq c$ we define a stripe anchored in dimension $n_i$ by $\mathbb{T}^{n_i-1} \times (\frac{i-1 + \alpha_i}{c+1}, \frac{i-1 + \beta_i}{c+1}) \times \mathbb{T}^{d-n_i}$. It is clear that this stripe has length $\frac{l}{c+1}$ and we claim that its intersection with $C$ is $C_i$.

Indeed it cannot contain any points of the form $y(i',j)$ for $i' \neq i$ as they belong to $(\frac{i'-1}{c+1}, \frac{i'}{c+1})^d$ which is disjoint from $\mathbb{T}^{n_i-1} \times (\frac{i-1}{c+1}, \frac{i}{c+1}) \times \mathbb{T}^{d-n_i}$ which contains the stripe.
		Thus, we only need to consider $y(i;j)$ for $1\leq j \leq u$. By the definition of the stripe, we are only interested in the $n_i$-th coordinate.
		We have by definition
		\begin{align*}
			y(i;j)_{n_i} = \frac{i-1}{c+1} + \frac{x(j)_{\ell}}{c+1},
		\end{align*}
		where $\ell$ is given by $M_{i, n_i} = a_{\ell}$. By the extraction choice we have that $M_{i, n_i} = b_i = a_{m_i}$.
		This shows that $\ell = m_i$.
		We have in total that $y(i;j) \in \mathbb{T}^{n_i-1} \times (\frac{i-1 + \alpha_i}{c+1}, \frac{i-1 + \beta_i}{c+1}) \times \mathbb{T}^{d-n_i}$ if and only if $x(j)_{m_i} \in (\alpha_i, \beta_i)$, which is by definition the case if and only if $x(j) \in X_i$ or equivalently $y(i;j) \in C'$.
		
		Thus, as $C'=C_1\cup\dots\cup C_c$, we can cover $C'$ with stripes with length $\frac{l}{c+1}$ anchored in pairwise different dimensions.
		Now it just remains to choose the stripes $\mathbb{T}^{i-1} \times (\frac{c}{c+1}, \frac{c+l}{c+1}) \times \mathbb{T}^{d-i}$ for the remaining dimensions which contain no points.
		The second inequality is simply an application of Theorem~\ref{th_VC_S}.
	\end{proof}
	
	\subsection{An equivalent definition for the extraction property and a random construction}
	This equivalent definition will be particularly useful for the generation of matrices with the extraction property.

\begin{proposition}\label{P:ProprieteExtraction}
A $c \times d$ matrix $M$ with entries in $\mathcal{A}_k = \set{a_1, \ldots, a_k}$ does not satisfy the $k$-extraction property if and only if there exist $U\subseteq\set{1,\dots,c}$, and $V\subseteq\set{1,\dots,d}$ of cardinality $\abs{U}-1$ such that for all $u\in U$ there is $b\in\mathcal{A}_k$ such that for all $1\le j\le d$ if $M_{u,j}=b$ then $j\in V$.
\end{proposition}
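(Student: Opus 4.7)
The plan is to recognize the $k$-extraction property as a System of Distinct Representatives (SDR) question and then invoke Hall's marriage theorem. For each row index $i \in \{1, \ldots, c\}$ and each letter $b \in \mathcal{A}_k$, I will set $S_i(b) := \{j \in \{1, \ldots, d\} : M_{i,j} = b\}$. The key observation is that, given a target word $(b_1, \ldots, b_c) \in \mathcal{A}_k^c$, the existence of pairwise distinct $j_1, \ldots, j_c$ with $M_{i,j_i} = b_i$ is exactly the existence of an SDR for the family $(S_1(b_1), \ldots, S_c(b_c))$. Consequently $M$ fails the $k$-extraction property if and only if there is some tuple $(b_1, \ldots, b_c)$ for which this family admits no SDR.

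For the ``only if'' direction, I fix such a failing tuple and apply Hall's theorem to obtain a subset $U \subseteq \{1, \ldots, c\}$ with $\bigl|\bigcup_{u \in U} S_u(b_u)\bigr| < |U|$. Setting $V_0 := \bigcup_{u \in U} S_u(b_u)$, we have $|V_0| \le |U| - 1$. If $|U| - 1 \le d$, I extend $V_0$ to any $V \subseteq \{1, \ldots, d\}$ of cardinality exactly $|U| - 1$; otherwise I first shrink $U$ to a subset $U' \subseteq U$ of size $d + 1$ (the Hall violation is preserved because $\bigcup_{u \in U'} S_u(b_u) \subseteq \{1, \ldots, d\}$ has size at most $d = |U'| - 1$) and take $V = \{1, \ldots, d\}$. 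In either case, for every $u$ in the chosen index set, $S_u(b_u) \subseteq V$, which is precisely the stated condition with $b = b_u$.

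For the converse, I suppose $U$ and $V$ are as in the proposition. For each $u \in U$ I pick the asserted witness $b_u \in \mathcal{A}_k$ and extend arbitrarily to a tuple $(b_1, \ldots, b_c)$. The hypothesis gives $S_u(b_u) \subseteq V$ for every $u \in U$, hence $\bigl|\bigcup_{u \in U} S_u(b_u)\bigr| \le |V| = |U| - 1$, so Hall's condition is violated and the family $(S_1(b_1), \ldots, S_c(b_c))$ has no SDR. Thus there cannot be pairwise distinct $j_1, \ldots, j_c$ with $M_{i,j_i} = b_i$, and the $k$-extraction property fails on this tuple.

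The argument is very clean once Hall's theorem is brought in; the only mild bookkeeping point is ensuring $V$ has cardinality exactly $|U| - 1$ and lies inside $\{1, \ldots, d\}$, which is handled by the extension/shrinking step above. I do not expect any substantial obstacle beyond correctly identifying the Hall dichotomy.
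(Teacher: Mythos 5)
Your proof is correct and takes essentially the same route as the paper: both reformulate the $k$-extraction property as the existence of a system of distinct representatives for the sets $S_i(b_i)=\{j:M_{i,j}=b_i\}$ and then apply Hall's marriage theorem in both directions. One small refinement on your side: you explicitly handle the case $|U|-1>d$ by shrinking $U$ to size $d+1$, whereas the paper tacitly assumes that a $V$ of cardinality $|U|-1$ fits inside $\{1,\dots,d\}$ (harmless in the paper's application, where $c<d$, but a genuine gap in the statement for general $c,d$ that you correctly closed).
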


\begin{proof}
The matrix $M$ has not the $k$-extraction property if and only if there exists some $b_1, \ldots, b_c \in \set{a_1, \ldots, a_k}^c$, such that there exist no pairwise different $n_1, \ldots, n_c$ such that $M_{\ell, n_{\ell}} = b_{\ell}$ for all $1\leq \ell \leq c$.

We assign now to any $1\leq \ell\leq c$ a set $X_{\ell} := \setm{1\leq j \leq d}{M_{\ell, j} = b_{\ell}} \subset \set{1,\ldots, d}$.
		With this notation, we have the equivalent statement that there exists no injective function $f\colon \set{1,\ldots, c} \to \set{1,\ldots, d}$ such that $f(\ell) \in X_{\ell}$.
		This is by Hall's Marriage Theorem equivalent to the violation of the \emph{marriage condition} which states that for any $W \subset \set{1,\ldots, c}$ we have $\abs{W} \leq \abs{\bigcup_{\ell \in W} X_{\ell}}$.

If $W$ fails the marriage condition, then with $U=W$, we have $\abs{\bigcup_{\ell \in W} X_{\ell}}<\abs{U}$,
hence we can pick $V\supseteq \bigcup_{\ell \in W} X_{\ell}$ of cardinality $\abs{U}-1$. Then $U,V$ satisfies the required condition of the proposition. Reciprocally if $U,V$ satisfies the condition then $U$ fails the marriage condition.
\end{proof}

\begin{remark}
If $U,V$ satisfies the property of Proposition~\ref{P:ProprieteExtraction} then we say that $U,V$ \emph{witness the failure of the $k$-extraction property}.
\end{remark}

It remains to find good $c \times d$ matrices with the $k$-extraction property, i.e. for given $d$ we want $c \log_2(k)$ to be as large as possible. The next lemma gives a non-constructive argument ensuring the existence of matrices with the $k$-extraction property.

\begin{lemma}\label{le_construct_k_extraction}
Let $q\in\Q$ with $q>1$. Let $k\ge 1$ and $m\ge 1$ be integers. Assume that $qm$ is an integer, and 
\begin{equation}\label{Ext-Req}
(q-\frac{q}{k})^{qm}>qm^2k^3\, . 
\end{equation}

Set $c=mk$, and $d=qmk$. There exists a $c\times d$ matrix with the $k$-extraction property.
\end{lemma}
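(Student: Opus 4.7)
The natural strategy is the probabilistic method: sample $M$ uniformly at random, each entry chosen independently and uniformly from $\mathcal{A}_k$, and show that the probability that $M$ fails to have the $k$-extraction property is strictly less than $1$.

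By Proposition~\ref{P:ProprieteExtraction}, failure is witnessed by some triple $(U,V,(b_u)_{u\in U})$ with $U\subseteq\set{1,\dots,c}$, $V\subseteq\set{1,\dots,d}$, $\abs{V}=\abs{U}-1$, $b_u\in\mathcal{A}_k$, and $M_{u,j}\ne b_u$ for every $u\in U$ and every $j\notin V$. For a fixed triple with $\abs{U}=s$, the relevant $s(d-s+1)$ entries are independent, each avoiding the designated value with probability $(k-1)/k$, so this particular witness occurs with probability $\rb{\frac{k-1}{k}}^{s(d-s+1)}$. Summing over the $\binom{c}{s}\binom{d}{s-1}k^s$ possible triples gives
\[
P(\text{fail})\le\sum_{s=1}^{c}\binom{c}{s}\binom{d}{s-1}k^s\rb{\frac{k-1}{k}}^{s(d-s+1)}.
\]

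The next step is to show this upper bound is less than $1$ under the hypothesis. Since $d-s+1\ge(q-1)c+1$ for all $s\le c$, one can uniformly lower-bound the exponent, and combined with crude estimates $\binom{c}{s}\le c^s$, $\binom{d}{s-1}\le d^{s-1}$, the sum reduces to a geometric series in a single base of the form $cdk\rb{\frac{k-1}{k}}^{(q-1)c+1}$. Substituting $c=mk$ and $d=qmk$, one rewrites this base in terms of $(q-q/k)^{qm}$ against $qm^2k^3=cdk$; the hypothesis makes the base strictly smaller than $1/2$, so the total sum is strictly less than $1$, giving a matrix with the required property.

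The main obstacle is the bookkeeping required to match the combinatorial union bound to the precise form $(q-q/k)^{qm}>qm^2k^3$. Expanding as $q^{qm}\rb{\frac{k-1}{k}}^{qm}>qm^2k^3$, the factor $q^{qm}$ must be absorbed by distributing the contributions from $\binom{d}{s-1}$ across the sum (since $d=qc$, this binomial naturally contributes a factor of order $q^{s}$ compared to $\binom{c}{s-1}$). Additional care is needed near the value of $s$ where the summands are largest, and in the regime where $q$ is close to $1$, where the geometric decay is slow and the polynomial correction $qm^2k^3$ becomes relatively heavy, forcing $m$ to grow at least like a constant times $\log k$.
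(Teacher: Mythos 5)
Your overall strategy---probabilistic method plus the equivalent failure criterion from Proposition~\ref{P:ProprieteExtraction} plus a union bound---is the right shape, but you are sampling from the wrong distribution, and this is not a bookkeeping issue: the specific hypothesis~\eqref{Ext-Req} is calibrated to a different random model and your chain of estimates does not actually close.

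The paper does not take the entries i.i.d.\ uniform. It counts over \emph{balanced} matrices, meaning each row is a word of length $d=qmk$ in which every symbol of $\mathcal{A}_k$ appears exactly $qm$ times. This structural restriction is what makes the hypothesis work. For a balanced row, the event ``symbol $b$ occurs only at positions in $V$'' is governed by $\binom{i-1}{qm}/\binom{qmk}{qm}$ (where $\abs{V}=i-1$), and the paper's bound
\[
\frac{A}{cdF_i}\ \ge\ \frac{1}{cdk}\left(q-\frac{q}{k}\right)^{qm}
\]
is uniform in $i$ precisely because $(qm)!\binom{i-1}{qm}\le (i-1)^{qm}<(mk)^{qm}$ for all $i\le c$. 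Note also that $\binom{i-1}{qm}=0$ for $i\le qm$, so in the balanced model the bad event is outright \emph{impossible} when $V$ is smaller than $qm$---a hard constraint your i.i.d.\ model does not have.

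In your i.i.d.\ model the per-row probability is $\left(\frac{k-1}{k}\right)^{d-s+1}$, and after your reductions you need
\[
cdk\left(\frac{k-1}{k}\right)^{(q-1)c+1}<1,\qquad\text{i.e.}\qquad \left(\frac{k}{k-1}\right)^{(q-1)mk+1}>qm^2k^3 .
\]
This is \emph{not} implied by~\eqref{Ext-Req}: take $q=2$, $m=15$, $k=100$. Then $(q-q/k)^{qm}=1.98^{30}\approx 7.9\cdot 10^8 > 4.5\cdot 10^8 = qm^2k^3$, so the hypothesis holds, but $(100/99)^{1501}\approx 3.6\cdot 10^6 < 4.5\cdot 10^8$, so your geometric base is about $126>1$ and the bound collapses. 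The two exponents $qm$ and $(q-1)mk+1$ are genuinely incomparable across the parameter range the lemma allows; your attempted ``rewriting the base in terms of $(q-q/k)^{qm}$'' has no route through, because $\left(\frac{k-1}{k}\right)^{(q-1)mk+1}$ and $\left(\frac{k-1}{k}\right)^{qm}$ have different exponents and the factor $q^{qm}$ cannot be recovered from $\binom{d}{s-1}/\binom{c}{s-1}$ after you have already thrown the binomials away as $c^s d^{s-1}$. An i.i.d.\ argument could be made to work, but it would require keeping the binomial coefficients precise (they are near $1$ for $s$ near $c$) and would naturally produce a \emph{different} sufficient condition than~\eqref{Ext-Req}. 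To prove the lemma as stated, follow the paper and randomize over balanced rows, where the uniform-in-$i$ bound $\frac{A}{cdF_i}\ge\frac{1}{cdk}(q-q/k)^{qm}>1$ gives the geometric ratio directly.
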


\begin{proof}
Set $\mathcal{A}_k=\set{a_1,\dots,a_k}$. We say that a word of length $d=qmk$ is \emph{balanced} if each symbol of $\mathcal{A}_k$ appears exactly $qm$ times. We call a $c\times d$ matrix \emph{balanced} if each row corresponds to a balanced word. The number of balanced words of length $d$ is
\begin{equation}\label{E:A}
A=\frac{d!}{ (qm)!^k}\,.
\end{equation}
Therefore the number of balanced $c\times d$ matrices is
\begin{equation}\label{E:T}
T=A^c=\left(\frac{d!}{ (qm)!^k}\right)^c\,.
\end{equation}
We shall compute an upper bound for the number $B$ of balanced $c\times d$ matrices not having the $k$-extraction property, by looking at the ones failing the equivalent property seen in Proposition~\ref{P:ProprieteExtraction}.

Let $i\le c$, $U\subseteq \set{1,\dots,c}$ be of cardinality $i$ and $V\subseteq \set{1,\dots,d}$ be of cardinality $i-1$. Let $b\in\mathcal{A}_k$. Denote by $E_i$ the number of balanced words of length $d$ such that the symbol $b$ can only appear at positions in $V$. We have
\begin{equation*}%\label{E:Ei}
E_i=\binom{i-1}{qm}\frac{(qmk-qm)!}{(qm)!^{k-1}}\,,
\end{equation*}
as we can start by picking the $qm$ positions where $b$ appears, among the $i-1$ possible positions, then pick all other symbols (among the $qmk-qm$ positions left).

Denote by $F_i$ the number of balanced words of length $d$ such that there is a symbol $b$ that can only appear at positions in $V$. We have
\begin{equation}\label{E:Fi}
F_i\le kE_i = k\binom{i-1}{qm}\frac{(qmk-qm)!}{(qm)!^{k-1}}\,.
\end{equation}
Indeed the set of words counted by $F_i$ is the union for $b\in\mathcal{A}_k$ of the set of words counted by $E_i$.

The number $G_i$ of balanced $c\times d$ matrices, such that $U,V$ witness the failure of the $k$-extraction property is therefore
\[
G_i = F_i^i A^{c-i}\,.
\]
Indeed, for each line, whose index is in $U$, we have to pick a word with at least one symbol appearing only at positions within $V$ (this is counted by $F_i$). For other lines we can pick any balanced word (counted by $A$). Finally denote by $H_i$ the number of balanced $c\times d$ matrices, such that there are $U\subseteq \set{1,\dots,c}$ of cardinality $i$ and $V\subseteq \set{1,\dots,d}$ of cardinality $i-1$ that witness the failure of the extraction property. We have
\begin{equation}\label{E:Hi}
H_i\le \binom{c}{i}\binom{d}{i-1} G_i = \binom{c}{i}\binom{d}{i-1}F_i^i A^{c-i}\,.
\end{equation}
Once again, $H_i$ is counting the elements of a union of $\binom{c}{i}\binom{d}{i-1}$ sets of cardinality $G_i$.

It follows from Proposition~\ref{P:ProprieteExtraction} that the number $B$ of balanced $c\times d$ matrices failing the $k$-extraction property satisfies
\begin{equation}\label{E:B}
B\le \sum_{i=1}^c H_i\,.
\end{equation}

From \eqref{E:T}, \eqref{E:Hi}, and the upper bounds $\binom{c}{i}\le c^i$ and $\binom{d}{i-1}\le d^{i-1}$ we have
\begin{equation}\label{E:THi}
\frac{T}{H_i} \ge \frac{A^c}{\binom{c}{i}\binom{d}{i-1}F_i^i A^{c-i}} =  \frac{A^i}{\binom{c}{i}\binom{d}{i-1}F_i^i} \ge d\frac{A^i}{ c^i d^i F_i^i} = d \left(\frac{A}{ c d F_i}\right)^i
\end{equation}
Moreover, from \eqref{E:A} and \eqref{E:Fi} we obtain
\begin{equation}\label{E:ArsFi}
\frac{A}{ c d F_i} \ge  \frac{\frac{d!}{ (qm)!^k} }{cd k\binom{i-1}{qm}\frac{(qmk-qm)!}{(qm)!^{k-1}}} = \frac{d! }{cd k(qm)! \binom{i-1}{qm}(qmk-qm)!}
\end{equation}
We observe that
\begin{equation}\label{E:I1}
 \frac{d!}{(qmk-qm)!} = \frac{(qmk)! }{(qmk-qm)!}\ge (qmk-qm)^{qm}\,,
\end{equation}
and
\begin{equation}\label{E:I2}
(qm)! \binom{i-1}{qm} \le (i-1)^{qm} < c^{qm}= (mk)^{qm}\,.
\end{equation}
Note that by assumption we have $\left(q-\frac{q}{k}\right)^{qm}>qm^2k^3=cdk$. Therefore from \eqref{E:ArsFi},\eqref{E:I1}, and \eqref{E:I2} we obtain
\begin{equation}\label{E:ArsFi2}
\frac{A}{ c d F_i} \ge  \frac{(qmk-qm)^{qm}}{cdk(mk)^{qm}}= \frac{1}{cdk} \left(q-\frac{q}{k}\right)^{qm} >1
\end{equation}
Thus from \eqref{E:THi} and \eqref{E:ArsFi2} we obtain $\frac{T}{H_i}>d=qc$. Hence it follows from \eqref{E:B} that
\[
\frac{B}{T} \le\sum_{i=1}^{c} \frac{H_i}{T}  < \sum_{i=1}^{c} \frac{1}{qc}= \frac{1}{q}<1\,.
\] 
Therefore $T>B$, hence there is a balanced matrix which does not fail the $k$-extraction property.
\end{proof}

\subsection{Finding a lower bound}
Now we use Lemma~\ref{le_construct_k_extraction} to give explicit lower bounds for $\VC{\mathcal{C}^d_{\text{per}}}$.
Thus we have to find some $q \in \Q, m \in \N, k \in \N$ which satisfy \eqref{Ext-Req} where all of these variables will depend on $d$.

%\CM{This is not necessarily optimal, but should be a decent way of doing it.}
We start by choosing
\begin{align}
	q := 1 + \frac{1}{f(d)} = \frac{1 + f(d)}{f(d)} = \frac{1}{1 - \frac{1}{1+ f(d)}},
\end{align}
for some function $f: \N \to \N$ that is slowly growing and unbounded.
We will choose $m$ and $k$ satisfying
\begin{align}
	m &= 24 f(d) \floor{\log_2 (d)}\\
	k &= \floor{\frac{d}{mq}} = \floor{\frac{d}{24 (f(d) +1) \floor{\log_2 (d)}}}.
\end{align}
We claim that this choice satisfies \eqref{Ext-Req} as long as
\begin{align}\label{eq_condition_f}
	\frac{d}{\floor{\log_2(d)}} > 48 (f(d) +2)^2.
\end{align}
Indeed this assures that $q \leq 2, m < d, k <d$.
We, furthermore, note that \eqref{eq_condition_f} implies $k > 2q/(q-1)$, or equivalently $q - q/k > (q+1)/2$.
Thus, we have
\begin{align*}
	\rb{q - \frac{q}{k}}^{qm} &> \rb{\frac{q+1}{2}}^{m} = \rb{1 + \frac{1}{2 f(d)}}^{m} = e^{m \log(1 + \frac{1}{2f(d)})} \geq e^{\frac{m}{4 f(d)}}\\
		& = e^{6 \floor{\log_2 d}} \geq \rb{\frac{e^{\log_2(d)}}{e}}^6 \geq \rb{\frac{d}{e}}^6 \geq q m^2 k^3,
\end{align*}
for large enough $d$.

Thus, for a fixed $d$ we have $d'\coloneqq qmk \leq d$ and we can apply Lemma~\ref{le_construct_k_extraction} and Proposition~\ref{pr_lifting_VC} to find,
%\begin{align*}
%	\VC{\mathcal{C}^{d'}_{\text{per}}} &\geq mk \log_2(k) = \frac{d'}{q} \log_2\rb{\frac{d'}{qm}}\\
%		& = d' \rb{1 - \frac{1}{f(d)+1}} \rb{\log_2(d') - \log_2(q) - \log_2(m)}\\
%		&= d' \rb{1 - \frac{1}{f(d)+1}} \rb{\log_2(d') - \log_2(24) - \log_2(f(d)+1) - \log_2(\floor{\log_2(d)})}.
%\end{align*}

\begin{align*}
	\VC{\mathcal{C}^{d'}_{\text{per}}} &\geq mk \log_2(k) = \frac{d'}{q} \log_2\rb{\frac{d'}{qm}}\\
		& = d' \rb{1 - \frac{1}{f(d)+1}} \rb{\log_2(d') - \log_2(q) - \log_2(m)}\\
		& = d' \rb{1 - \frac{1}{f(d)+1}} \cdot \\
		& \phantom{=} \rb{\log_2(d') - \log_2(24) - \log_2(f(d)+1) - \log_2(\floor{\log_2(d)})}.
\end{align*}

We already see that following this strategy we cannot do any better than $\VC{\mathcal{C}^{d'}_{\text{per}}} \geq d' (\log_2(d')) - \log_2(\log_2(d))$.
Let us choose $f(d) = \floor{\log_2(d)}$.
We see immediately that \eqref{eq_condition_f} is satisfied for large enough $d$.
Furthermore our estimate for $\VC{\mathcal{C}^{d'}_{\text{per}}}$ becomes now
\begin{align*}
	\VC{\mathcal{C}^{d'}_{\text{per}}} &\geq d' \rb{1 - \frac{1}{2 \log_2(d)}} \rb{\log_2(d') - \log_2(48) - 2 \log_2(\floor{\log_2(d)})}\\
		&\geq d' (\log_2(d') - 3 \log_2(\log_2(d)))
\end{align*}

%\begin{align*}
%	\VC{\mathcal{C}^{d'}_{\text{per}}} \geq d' (\log_2(d') - 3 \log_2(\log_2(d)))
%\end{align*}

for large enough $d$.
We finally also have $d' \geq d - qm = d - 24 (\floor{\log_2(d)}^2+ \floor{\log_2(d)}) $, which gives in particular $\log_2(d') \geq \log_2(d) - 1$ for large enough $d$.

This gives our final estimate
\begin{align*}
	\VC{\mathcal{C}^d_{\text{per}}} &\geq \VC{\mathcal{C}^{d'}_{\text{per}}} \geq d' (\log_2(d') - 3 \log_2(\log_2(d)))\\
		&\geq  d (\log_2(d') - 3 \log_2(\log_2(d))) - 24 (\log_2(d))^3\\
		&\geq d (\log_2(d) - 4 \log_2(\log_2(d))).
\end{align*}

Thus, we have shown the lower bound for $\VC{\mathcal{C}^d_{\text{per}}}$ stated in Theorem~\ref{th_main}.

\begin{remark}
	The lower bound can definitely be improved, but using this method we cannot do better than $d(\log_2(d) - \log_2(\log_2(d))$.
\end{remark}

%\begin{theorem}\label{th_main}
%	For $d$ sufficiently large we have
%	\begin{align*}
%		d(\log_2(d) - 4 \log(\log(d))) \leq \VC{\mathcal{B}^d_{\text{per}}} \leq d(\log_2(d) + 3 \log(\log(d))).
%	\end{align*}
%	This shows in particular that $\VC{\mathcal{B}^d_{\text{per}}} \sim d \log_2(d)$.
%\end{theorem}
%
%Looking at the proof of Proposition \ref{subclass S^d} one sees the chosen stripes therein are all of the same width. From this it follows that the lower bound axis-parallel periodic boxes also holds for the subhypothesis set of axis-parallel periodic cubes $\mathcal{C}^d_{\text{per}}$, resulting in the following corollary.
%
%\begin{corollary}
%We have $\VC{\mathcal{C}^d_{\text{per}}} = d \log_2(d) + \Theta \left( d \log \left(\log \left( d \right) \right) \right)$.
%\end{corollary}

%\[
%\mathcal{C}^d_{\text{per}} := \setm{[a_1, b_1] \times \ldots \times [a_d, b_d]}{\forall 1\leq i,j\leq d: \left( 0 \leq a_i \neq b_i \leq 1\right) \wedge \left( a_i - b_i = a_j - b_j \right) }.
%\] Passt nicht ganz

\section*{Open Questions}
As folklore it is said that intersection-closed hypothesis set indicates a rather easy structure and thus, implies linear growth of the VC-dimension for the higher dimensional product of this set. %\TL{Is it good this way? This may change during the reviewing process} 
As this is not given for boxes on the Torus this is one of the first indication for a non-linear growth of the VC-dimension.
Considering the result we were able to obtain here opens the question if the maximal amount of disjoint sets obtained by intersecting two different hypothesis has any influence on the growth of the VC-dimension. Further examples like boxes on the torus may be of high interest here.

Another interesting problem arises when one looks into the question for cubes with a fixed edge length. Looking at the main Theorem~\ref{th_main} one actually sees that for a fixed dimension all the cubes in the construction are of the same width, let's call this $r_d$. Looking at $\mathcal{C}^d_{\text{per}}(r)$, i.e. all cubes of width $r$ on the torus, one finds that $\VC{\mathcal{C}^d_{\text{per}}(r_1)}-1  \leq \VC{\mathcal{C}^d_{\text{per}}(r_2)}$ for all $r_1\leq r_2$. Indeed, let us assume one has $s$ points that can be shattered by $\mathcal{C}^d_{\text{per}}(r_1)$. 
As both the points and the periodic boxes (and cubes) can be freely translated on the torus, we can assume that one of the points is in the origin.
By labeling this point positively, we see that the remaining $s-1$ points are also shattered by $(\mathcal{C}^d_{\text{per}}(r_1))^c$, i.e. the union of stripes anchored in different dimensions, of the form $\mathbb{T}^{i-1} \times (b_i, b_i + 1-r_1) \times \mathbb{T}^{d-i}$, where $0 \leq b_i, b_i+1-r_1 \leq 1$.
Now we can view the $s-1$ points as elements of $[0,1]^d$ and scale them by a factor $\frac{1-r_2}{1-r_1} < 1$.
Thus, these $s-1$ points are still shattered by the scaled union of stripes, which have now length $1-r_2$. In particular, these scaled $s-1$ points are shattered by $(\mathcal{C}^d_{\text{per}}(r_2))^c$ proving the claim.
Thus, in general one could expect that the VC-Dimension of cubes is non-decreasing with the radius $r$.
Furthermore, following our construction, we have

\[
\VC{\mathcal{C}^d_{\text{per}}(r)}= d \log_2(d) + O \left( d \log \left(\log \left( d \right) \right) \right) \text{ for all } \frac{d-1}{d}\leq r < 1.
\]

It is not clear how this will work out for $0< r < \frac{d-1}{d}$. It is easy to check that for $r \leq \frac{1}{2}$ the VC-dimension is at most the same as in the non-periodic case.
So the remaining case of  $\frac{1}{2}< r < \frac{d-1}{d}$ is still open and by far not clear what kind of transition happens between being of linear growth and being of quasilinear growth.

\section*{Acknowledgements}
The authors want to thank Manfred Scheucher for providing computational results for lower dimensions with the help of a computer cluster of the TU Berlin. Furthermore, thanks to, again, Manfred Scheucher and Stefan Lendl for helpful discussions on this problem and thanks to Gabriel Conant for pointing towards the questions about cubes.

\bibliography{VC-bib}

\begin{thebibliography}{10}

\bibitem{Despres}
C.~J.~J. {Despres}.
\newblock {The Vapnik-Chervonenkis dimension of cubes in $\mathbb{R}^d$}.
\newblock {Preprint available at
  \href{https://arxiv.org/abs/1412.6612}{arXiv:1412.6612}}.

\bibitem{Devroye1996}
L.~{Devroye}, L.~{Gy\"orfi}, and G.~{Lugosi}.
\newblock {\em {A probabilistic theory of pattern recognition.}}, volume~31.
\newblock New York, NY: Springer, 1996.

\bibitem{Doliwa2014}
T.~{Doliwa}, G.~{Fan}, H.~U. {Simon}, and S.~{Zilles}.
\newblock {Recursive teaching dimension, VC-dimension and sample compression.}
\newblock {\em {J. Mach. Learn. Res.}}, 15:3107--3131, 2014.

\bibitem{Dudley1984}
R.~M. {Dudley}.
\newblock {A course on empirical processes.}
\newblock {Ecole d'\'et\'e de probabilit\'es de Saint-Flour XII - 1982, Lect.
  Notes Math. 1097, 1-142 (1984).}, 1984.

\bibitem{Dudley2014}
R.~M. {Dudley}.
\newblock {\em {Uniform central limit theorems. 2nd ed.}}, volume 142.
\newblock Cambridge: Cambridge University Press, 2nd ed. edition, 2014.

\bibitem{Hinrichs2004}
A.~{Hinrichs}.
\newblock {Covering numbers, Vapnik-\v{C}ervonenkis classes and bounds for the
  star-discrepancy.}
\newblock {\em {J. Complexity}}, 20(4):477--483, 2004.

\bibitem{Matousek1999}
J.~{Matou\v{s}ek}.
\newblock {\em {Geometric discrepancy. An illustrated guide.}}, volume~18.
\newblock Berlin: Springer, 1999.

\bibitem{Matousek1993}
J.~{Matou\v{s}ek}, E.~{Welzl}, and L.~{Wernisch}.
\newblock {Discrepancy and approximations for bounded VC-dimension.}
\newblock {\em {Combinatorica}}, 13(4):455--466, 1993.

\bibitem{Mohri2018}
M.~{Mohri}, A.~{Rostamizadeh}, and A.~{Talwalkar}.
\newblock {\em {Foundations of machine learning. 2nd edition.}}
\newblock Cambridge, MA: MIT Press, 2nd edition edition, 2018.

\bibitem{Moran2017}
S.~{Moran}, A.~{Shpilka}, A.~{Wigderson}, and A.~{Yehudayoff}.
\newblock {Teaching and compressing for low VC-dimension.}
\newblock In {\em {A journey through discrete mathematics. A tribute to
  Ji\v{r}\'{\i} Matou\v{s}ek}}, pages 633--656. Cham: Springer, 2017.

\bibitem{Moran2016}
S.~{Moran} and A.~{Yehudayoff}.
\newblock {Sample compression schemes for VC classes.}
\newblock {\em {J. ACM}}, 63(3):10, 2016.
\newblock Id/No 21.

\bibitem{Rudolf2018}
D.~{Rudolf}.
\newblock {An upper bound of the minimal dispersion via delta covers.}
\newblock In {\em {Contemporary computational mathematics -- a celebration of
  the 80th birthday of Ian Sloan. In 2 volumes}}, pages 1099--1108. Cham:
  Springer, 2018.

\bibitem{Shalev2014}
S.~{Shalev-Shwartz} and S.~{Ben-David}.
\newblock {\em {Understanding machine learning. From theory to algorithms.}}
\newblock Cambridge: Cambridge University Press, 2014.

\bibitem{vdVaart1996}
A.~{van der Vaart} and J.~A. {Wellner}.
\newblock {\em {Weak convergence and empirical processes. With applications to
  statistics.}}
\newblock New York, NY: Springer, 1996.

\bibitem{Vapnik1998}
V.~N. {Vapnik}.
\newblock {\em {Statistical learning theory.}}
\newblock Chichester: Wiley, 1998.

\bibitem{Vapnik1971}
V.~N. {Vapnik} and A.~Y. {Chervonenkis}.
\newblock {On the uniform convergence of relative frequencies of events to
  their probabilities.}
\newblock {\em {Theory Probab. Appl.}}, 16:264--280, 1971.

\bibitem{vdVaart2009}
J.~A. {Wellner} and A.~{van der Vaart}.
\newblock {A note on bounds for VC dimensions.}
\newblock In {\em {High dimensional probability. V: The Luminy volume. Most
  papers based on the presentations at the conference (HDP V), Luminy, France,
  May 26--30, 2008.}}, pages 103--107. Beachwood, OH: IMS, Institute of
  Mathematical Statistics, 2009.

\end{thebibliography}
\bibliographystyle{abbrv}

\vspace{23bp}

\end{document}